\documentclass[11pt]{article}%
\usepackage{amsfonts}
\usepackage{amsmath}
\usepackage{amssymb}
\usepackage{euler}
\usepackage{graphicx}%
\providecommand{\U}[1]{\protect\rule{.1in}{.1in}}
\newtheorem{theorem}{Theorem}

\newtheorem{lemma}[theorem]{Lemma}

\newenvironment{proof}[1][Proof]{\noindent\textbf{#1.} }{\ \rule{0.5em}{0.5em}}
\begin{document}

\title{On \textsf{Constructive Connectedness Properties}}
\author{\textsf{Douglas S. Bridges}\\\textsf{School of Mathematics \& Statistics,}\\\textsf{\ University of Canterbury,}\\\textsf{\ Christchurch 8140, New Zealand}}
\maketitle

\begin{abstract}%
%TCIMACRO{\TeXButton{noindent}{\noindent}}%
%BeginExpansion
\noindent
%EndExpansion
\textsf{We plug two gaps in the constructive proof of Theorem 1 (respectively,
Theorem 2) in \cite{dsb}, showing that the property of C-connectedness
(respectively, O-connectedness) of a subset }$S$\textsf{\ of }$\mathbb{R}%
$\textsf{\ is equivalent to}$\ S$\textsf{\ containing the interval }$\left[
a,b\right]  $\textsf{\ (respectively, }$\left(  a,b\right)  $)
\textsf{whenever }$a,b\in S$\textsf{\ and }$a<b$\textsf{. }

\end{abstract}

%

%TCIMACRO{\TeXButton{sf}{\normalfont\sf}}%
%BeginExpansion
\normalfont\sf
%EndExpansion%
%TCIMACRO{\TeXButton{depth 1}{\setcounter{secnumdepth}{0}}}%
%BeginExpansion
\setcounter{secnumdepth}{0}%
%EndExpansion

\subsection{Types of connectedness}

This note amplifies and extends earlier work in the Bishop-style
constructive\footnote{%
%TCIMACRO{\TeXButton{sf}{\normalfont\sf}}%
%BeginExpansion
\normalfont\sf
%EndExpansion
For background in this variety of constructive mathematics, see
\cite{Bishop,BB,bv,Handbook}, of which the last is the most up-to-date
reference.} theory of connectedness \cite{dsb,dsb78a,dsbcon77,MarkM78,MarkM79}%
, in particular for subsets of the real line $\mathbb{R}$.

The proof of Theorem 1 in \cite{dsb} is incomplete in two ways: one, it claims
without justification that if the located set $S$ is C-connected, $x_{0}%
\in\left[  a,b\right]  $, and $a<x_{0}$, then $A\equiv S\cap(-\infty,x_{0}]$
is located; and two,\ the argument establishing the converse lacks detail. In
order to fill these gaps in the proof, we need to access the following definitions.

A metric space $(X,\rho)$ is said to be

\begin{itemize}
\item C-\textsc{connected} if for each located closed subset $A$ of $X$ whose
metric complement%
\[
X-A\equiv\left\{  x\in X:\rho(x,S)>0\right\}
\]
is inhabited, there exists a point in the intersection of $A$ and the closure
$\overline{X-A}\ $of $X-A$;

\item D-\textsc{connected} if $A=X$ whenever $A$ is an inhabited, open,
closed, and located subset of $X$;\footnote{%
%TCIMACRO{\TeXButton{sf}{\normalfont\sf}}%
%BeginExpansion
\normalfont\sf
%EndExpansion
What we name \emph{D-connected} here is called \emph{connected} in
\cite{dsb,dsb78a,dsbcon77}. This change reflects that our current notion of
\emph{connected} is now, and rightly, the standard one in constructive
analysis.} and

\item \textsc{connected} if $A\cap B$ is inhabited whenever $A,B$ are
inhabited, open sets with $X=A\cup B$.
\end{itemize}

%

%TCIMACRO{\TeXButton{noindent}{\noindent}}%
%BeginExpansion
\noindent
%EndExpansion
Note that these three types of connectedness are classically equivalent.
Constructively, C-connected implies D-connected \cite[2.1]{dsbcon77}, but the
converse is essentially nonconstructive \cite[Thms 3 and 4]{dsb}.

\begin{lemma}
\label{jun03l3}Let $S$ be a located D-connected subset of $\mathbb{R}$, and
$a,b$ points of $S$ with $a\leq b$. Then $S\cap\left[  a,b\right]  $ is dense
in $\left[  a,b\right]  \ $\emph{\cite[3.2]{dsbcon77}}.
\end{lemma}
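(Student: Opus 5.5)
The plan is to show that for every $x\in[a,b]$ and every $\varepsilon>0$ there is a point of $S\cap[a,b]$ within $\varepsilon$ of $x$. I will use the locatedness of $S$ to decide whether $x$ is close to $S$. Where it is not, I will use D-connectedness to rule that case out.

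First I dispose of the endpoints. Since $x-a$ and $b-x$ are real numbers, I can decide that either $x-a<\varepsilon$, or $b-x<\varepsilon$, or both $x-a>\varepsilon/2$ and $b-x>\varepsilon/2$. In the first two cases $a$ or $b$ is the required point of $S\cap[a,b]$. So I may assume $a<x-\varepsilon/2$ and $x+\varepsilon/2<b$.

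Next, since $S$ is located, $\rho(x,S)$ exists, so either $\rho(x,S)<\varepsilon/4$ or $\rho(x,S)>\varepsilon/8$.
\begin{itemize}
\item In the first case I pick $s\in S$ with $|x-s|<\varepsilon/4$. This $s$ automatically lies in $(a,b)$, because $x$ is more than $\varepsilon/2$ from both endpoints. So $s$ is the required point.
\item In the second case $S$ has a gap: no point of $S$ lies in $(x-\varepsilon/8,x+\varepsilon/8)$. I set $A\equiv S\cap(-\infty,x]=S\cap(-\infty,x-\varepsilon/8]$ and aim for a contradiction.
\end{itemize}

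The set $A$ has the following properties.
\begin{itemize}
\item It is inhabited, since $a\in A$.
\item It is closed in $S$, since it is the intersection of $S$ with a closed half-line.
\item It is open in $S$, because of the gap. If $s\in A$ then $s\le x-\varepsilon/8$, so every $t\in S$ with $|t-s|<\varepsilon/16$ satisfies $t<x$; and such $t$ lies in $S$ but not in the gap, so $t\le x-\varepsilon/8$ and hence $t\in A$.
\item It is not all of $S$, since $b\in S$ but $b>x$.
\end{itemize}
If $A$ is also located, D-connectedness of $S$ forces $A=S$, which is absurd. So the second case is impossible, and the first case yields the point we want.

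The main obstacle is proving that $A$ is located, meaning that $\rho(y,A)$ exists for every real $y$. My plan is to exploit the gap by a "folding" trick. For $y\le x$, the points of $S\setminus A$ all lie at or beyond $x+\varepsilon/8$. Approximate $\rho(y,A)$ as follows: use the locatedness of $S$ to find points of $S$ nearly minimizing the distance to $y$, and also to the reflected or shifted points $y-k\varepsilon/16$ for $k=0,1,2,\dots$. Any approximating point of $S$ that lands below $x$ automatically lies below $x-\varepsilon/8$, so the approximations split decidably into those in $A$ and those in $S\setminus A$. Because of the gap, this lets us approximate $\inf\{|y-s|:s\in A\}$ to any prescribed accuracy.

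For $y\ge x$, the identity $\rho(y,A)=(y-x)+\rho(x,A)$ reduces the problem to the point $x$. This holds because every point of $A$ lies to the left of $x$. If this approximation becomes too delicate, the fallback is to cite the standard constructive fact that if $S$ is located and $c$ is a point with $\rho(c,S)>0$, then $S\cap(-\infty,c]$ is located. I would prove that fact by exactly the approximation argument just sketched.
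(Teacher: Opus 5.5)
The paper gives no proof of this lemma; it quotes it from \cite[3.2]{dsbcon77}. So I judge your argument on its own terms. Your reduction is sound. The three-way split on $x-a$ and $b-x$ is correct, as is the dichotomy $\rho(x,S)<\varepsilon/4$ or $\rho(x,S)>\varepsilon/8$. In the gap case, your checks that $A=S\cap(-\infty,x]$ is inhabited, closed and open in $S$, and omits $b$ are also correct, and ex falso then disposes of that case.

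The gap is that D-connectedness applies only to \emph{located} clopen sets, and the locatedness of $A$ is exactly what you leave unproved. Your sketch does not supply it, for four reasons.
\begin{enumerate}
\item The split into $y\le x$ and $y\ge x$ is not a constructively available case distinction for an arbitrary real $y$.
\item The shifts $y-k\varepsilon/16$ are never cut down to a finite list, and you never say which quantity is minimized.
\item You do not explain why a \emph{fixed} spacing $\varepsilon/16$ gives arbitrary accuracy. It does, but only because a shift point lying between $y$ and a near-minimizer in $A$ loses nothing, and that has to be argued.
\item Your fallback ``standard fact'' is just the claim itself.
\end{enumerate}
This paper was written precisely to repair a proof that asserted the locatedness of $S\cap(-\infty,x_0]$ without justification, so this step cannot be waved through.

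The step is easy to fill by the method of Lemma~\ref{jun03l2}, with your gap doing the work that density does there.
\begin{enumerate}
\item Put $\delta=\rho(x,S)>\varepsilon/8$ and fix $y\in\mathbb{R}$. By local total boundedness of $S$, choose $r>|y-a|$ such that $T=S\cap[y-r,y+r]$ is totally bounded.
\item For $0<\eta<\delta$, take an $\eta$-approximation $\{\xi_1,\ldots,\xi_N\}\subset T$. Each $\xi_k\in S$ has $|\xi_k-x|\ge\delta$, so you can decide whether $\xi_k\le x-\delta$ or $\xi_k\ge x+\delta$.
\item If $z\in A\cap T$ and $|z-\xi_k|<\eta$, then $\xi_k<x-\delta+\eta<x+\delta$. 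Hence $\xi_k\le x-\delta$, so $\xi_k\in A$.
\item Therefore $\{\xi_k:\xi_k<x\}$ is an $\eta$-approximation to $A\cap T$ by points of $A\cap T$. It is inhabited, because some $\xi_k$ lies within $\eta$ of $a$.
\item So $A\cap T$ is totally bounded, and $\rho(y,A\cap T)$ exists.
\item This distance equals $\rho(y,A)$. Since $r>|y-a|$, each $s\in A$ satisfies either $|y-s|<r$, so that $s\in A\cap T$, or $|y-s|>|y-a|\ge\rho(y,A\cap T)$.
\end{enumerate}
With this argument inserted, your proof is complete.
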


\begin{lemma}
\label{jun03l2}Let $S$ be a located subset of $\mathbb{R}$, let $a,b$ be
points of $S$ with $a<b$ such that $S\cap\left[  a,b\right]  $ is dense in
$S$, and let $x\in\left[  a,b\right]  $. Then $S\cap(-\infty,x]$ and
$S\cap\lbrack x,\infty)$ are totally bounded.
\end{lemma}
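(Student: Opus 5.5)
The hypothesis that $S\cap\left[a,b\right]$ is dense in $S$ means that every point of $S$ is a limit of points of $\left[a,b\right]$, and $\left[a,b\right]$ is closed, so $S\subset\left[a,b\right]$. My plan is therefore to show first that $S$ is totally bounded, and then to extract from a fine finite approximation of $S$ a finite approximation of $S\cap(-\infty,x]$. The case of $S\cap\lbrack x,\infty)$ is symmetric: replace $S$ by $-S$, which swaps the roles of $a$ and $b$. Each step must stay constructive, so no case split will be made on whether $S$ has points in a given region. Instead I shall use only the located-distance comparisons $\rho(c,S)<\beta$ or $\rho(c,S)>\alpha$, valid for $\alpha<\beta$.

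\textbf{Step 1.} Since $S$ is located and $S\subset\left[a,b\right]$, $S$ is totally bounded. Given $\delta>0$, cover $\left[a,b\right]$ by a finite $\delta$-grid $c_{1},\dots,c_{m}$. For each $k$, decide either $\rho(c_{k},S)<2\delta$, in which case I pick $s_{k}\in S$ with $\left\vert s_{k}-c_{k}\right\vert <2\delta$, or $\rho(c_{k},S)>\delta$, in which case I discard $c_{k}$. The chosen $s_{k}$ form a $3\delta$-approximation of $S$, because each point of $S$ is within $\delta$ of some $c_{k}$, and that $c_{k}$ cannot have been discarded.

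\textbf{Step 2.} Fix $\varepsilon>0$ and set $\delta=\varepsilon/8$. I run the same grid procedure, but only with grid points $c_{k}\leq x-2\delta$, spaced at most $\delta$ apart, running from $a$ up to the last admissible point. I also add the point $a$ itself, which lies in $T\equiv S\cap(-\infty,x]$ because $a\in S$ and $a\leq x$; this guarantees the set is inhabited. For each such $c_{k}$, decide either $\rho(c_{k},S)<2\delta$, in which case the chosen $s_{k}$ satisfies $s_{k}<c_{k}+2\delta\leq x$ and so lies in $T$, or $\rho(c_{k},S)>\delta$, in which case I discard $c_{k}$. Now take $y\in T$ with $y\leq x-3\delta$. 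Then $y$ is within $\delta$ of some grid point $c_{k}$, that grid point was not discarded, and so $y$ is within $3\delta$ of a kept point of $T$.

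\textbf{Step 3: main obstacle.} The difficulty is the points $y\in T$ lying in the thin band $(x-3\delta,x]$. We cannot decide whether such points exist, and a grid point near $x$ may yield a witness $s$ with $s>x$, which is not in $T$. My first attempt is to exploit the fact that the band has width $3\delta$. Test $c=x-2\delta$: if $\rho(c,S)<2\delta$, the witness lies in $(x-4\delta,x)\subset T$ and is within $7\delta<\varepsilon$ of every point of the band. If instead $\rho(c,S)>\delta$, then $S$ misses $(x-3\delta,x-\delta)$, so any band point of $T$ lies in $[x-\delta,x]$. I then iterate at scale $\delta/2$ with test point $x-\delta$, and so on. If this halving yields no witness after finitely many steps, any band point of $T$ is squeezed into an interval $[x-\delta 2^{-n},x]$.

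To finish, I would combine this with a comparison on $\rho(x,S)$. If $\rho(x,S)>0$, then for large $n$ the band is empty, since a point of $T$ in $[x-\delta 2^{-n},x]$ would put $S$ within $\delta 2^{-n}$ of $x$. If $\rho(x,S)<\delta$, a witness $s$ close to $x$ can be replaced by a point of $T$ using the information accumulated in the halving steps. Making this termination argument genuinely constructive, without deciding whether $x\in\overline{S}$, is the step I expect to need the most care. Once it is settled, the kept points from Step 2 together with the band witness form an $\varepsilon$-approximation of $T$ drawn from $T$, so $T$ is totally bounded.
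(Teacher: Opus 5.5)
\textbf{There is a genuine gap, at the point you flagged in Step 3.} Under the reading you adopted, that step cannot be completed. If ``$S\cap[a,b]$ is dense in $S$'' is taken literally (equivalently, $S\subset[a,b]$), the lemma implies WLPO, so it has no constructive proof.

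Take any real $c$ with $|c|<\frac14$, and let $S=\{-1,c,1\}$, $a=-1$, $b=1$, $x=0$. Then $S$ is located and $S\cap[a,b]=S$. Suppose $T=S\cap(-\infty,0]$ were totally bounded. Then it would be located, and you could decide $\rho(0,T)<\frac12$ or $\rho(0,T)>\frac14$. The first case forces $c\le 0$, since the only point of $T$ that can lie that close to $0$ is $c$. The second case gives $\neg(c\le 0)$. That is WLPO.

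This example is exactly your band obstruction. Here $\rho(x,S)=|c|$ may be tiny and the only witness near $x$ is $c$. Nothing gathered from your halving tests can tell you whether $c$ lies in $T$. So the hoped-for replacement of a witness near $x$ by a point of $T$ does not exist in general.

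\textbf{The missing ingredient is density of $S\cap[a,b]$ in $[a,b]$.} This is the hypothesis the paper's proof actually uses, and Lemma~\ref{jun03l3} supplies it when this lemma is applied in Theorem~\ref{jun03t1}. The printed ``dense in $S$'' is evidently a slip. With the correct hypothesis, the band costs nothing:
\begin{itemize}
\item The paper takes an $\varepsilon/3$-approximation $\{\xi_1,\dots,\xi_N\}$ of a totally bounded piece of $S$.
\item It splits the indices into those with $\xi_k<x$, whose points already lie in $T$, and those with $\xi_k>x-\varepsilon/3$.
\item Any $z\in T$ within $\varepsilon/3$ of a $\xi_k$ of the second kind satisfies $0\le x-z<2\varepsilon/3$. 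So a single point $\xi'\in T$ within $\varepsilon/3$ of $x$ serves for all of them.
\item To get $\xi'$: either $x>a$, and density in $[a,b]$ gives $\xi'\in S$ with $a<\xi'<x$ and $x-\xi'<\varepsilon/3$; or $x<a+\varepsilon/3$, and you take $\xi'=a$.
\end{itemize}
Your Step 2 grid corresponds to the kept indices and is fine. It is the density of $S$ just to the left of $x$ that makes Step 3 disappear.

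\textbf{Your Step 1 also does not survive the correction.} Under the correct hypothesis $S$ need not lie in $[a,b]$ (take $S=\mathbb{R}$), so the conclusion can only be local. The paper uses that a located subset of $\mathbb{R}$ is locally totally bounded. It picks $r>b-a$ with $S\cap[a-r,a+r]$ totally bounded, and proves that $S\cap(-\infty,x]\cap[a-r,a+r]$ is totally bounded. That is what Theorem~\ref{jun03t1} needs to conclude that $S\cap(-\infty,x]$ is located.
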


\begin{proof}
Since $S$ is located in $\mathbb{R}$, it is locally totally bounded
\cite[2.2.18]{bv}. Hence for all but countably many $r>0$, the set
$S\cap\left[  a-r,a+r\right]  $ is totally bounded (see Definition 6.1 in
Chapter 4 of \cite{BB}). It will suffice to consider $r>b-a$ such that
$S\cap\left[  a-r,a+r\right]  $ is totally bounded and to prove that%
\[
A\equiv S\cap(-\infty,x]\cap\left[  a-r,a+r\right]
\]
is totally bounded. To that end, first note that $\left[  a,b\right]
\subset\left[  a-r,a+r\right]  $. Let $\varepsilon>0$ and construct an
$\varepsilon/3$-approximation $\left\{  \xi_{1},\ldots,\xi_{N}\right\}  $ to
$S\cap\left[  a-r,a+r\right]  $. Write $\left\{  1,\ldots,N\right\}  $ as a
union of sets $P$ and $Q$, where

\begin{itemize}
\item[--] if $k\in P$, then $\xi_{k}<x$ and therefore $\xi_{k}\in A$, and

\item[--] if $k\in Q$, then $\xi_{k}>x-\varepsilon/3$.
\end{itemize}

%

%TCIMACRO{\TeXButton{noindent}{\noindent}}%
%BeginExpansion
\noindent
%EndExpansion
Consider any $k\in Q$. Note that for each $z\in A$ with $\left\vert z-\xi
_{k}\right\vert <\varepsilon/3$,
\[
\xi_{k}-\frac{\varepsilon}{3}<z\leq x<\xi_{k}+\frac{\varepsilon}{3}%
\]
and therefore $0<x-z<2\varepsilon/3$. Either $x>a$ or $x<a+\varepsilon/3$. In
the first case, since $S\cap\left[  a,b\right]  $ is dense in $\left[
a,b\right]  $, we can pick $\xi_{k}^{\prime}\in S$ with $a<\xi_{k}^{\prime}<x
$ and $x-\xi_{k}^{\prime}<\varepsilon/3$; then $\xi_{k}^{\prime}\in A$, and
for each $z\in A$ with $\left\vert z-\xi_{k}\right\vert <\varepsilon/3$,%
\begin{equation}
\left\vert z-\xi_{k}^{\prime}\right\vert \leq\left\vert z-x\right\vert
+\left\vert x-\xi_{k}^{\prime}\right\vert <\frac{2\varepsilon}{3}%
+\frac{\varepsilon}{3}=\varepsilon. \label{2a}%
\end{equation}
In the second case, $a\leq$ $x<a+\varepsilon/3$. Setting $\xi_{k}^{\prime
}=a\in A$, for each $z\in A$ with $\left\vert z-\xi_{k}\right\vert
<\varepsilon/3$ we again have (\ref{2a}). It now follows that
\[
\left\{  \xi_{k}:k\in P\right\}  \cup\left\{  \xi_{k}^{\prime}:k\in Q\right\}
\]
is a finitely enumerable $\varepsilon$-approximation to $A$. Since
$\varepsilon>0$ is arbitrary, we conclude that $A$ is totally bounded.
\end{proof}

%

%TCIMACRO{\TeXButton{medskip}{\medskip}}%
%BeginExpansion
\medskip
%EndExpansion

We can now fill the gaps in the original argument for \cite[Thm 1]{dsb}.

\begin{theorem}
\label{jun03t1}The following conditions are equivalent on a located subset $S
$ of $\mathbb{R}$.

\begin{enumerate}
\item[\emph{(i)}] $S$ is C-connected.

\item[\emph{(ii)}] If $a,b\in S$ and $a<b$, then $\left[  a,b\right]  \subset
S $.
\end{enumerate}
\end{theorem}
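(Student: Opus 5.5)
The plan is to prove the two implications separately. For (i)$\Rightarrow$(ii) I use Lemmas \ref{jun03l3} and \ref{jun03l2} to build a located closed test set $A$ inside $S$. For (ii)$\Rightarrow$(i) I use an interval-halving construction.

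\emph{(i)$\Rightarrow$(ii).} Let $a,b\in S$ with $a<b$, and let $x_0\in[a,b]$; we must show $x_0\in S$.

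First, $S$ is D-connected because C-connectedness implies D-connectedness \cite[2.1]{dsbcon77}. So by Lemma \ref{jun03l3}, $S\cap[a,b]$ is dense in $[a,b]$.

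Second, since $a<b$, either $x_0<b$ or $x_0>a$. The two cases are symmetric (interchange the roles of $(-\infty,x_0]$ and $[x_0,\infty)$), so assume $x_0<b$. Put $A\equiv S\cap(-\infty,x_0]$. Then:
\begin{itemize}
\item $A$ is closed in $S$.
\item $A$ is totally bounded by Lemma \ref{jun03l2}, applied with the density just obtained. A totally bounded subset of $\mathbb{R}$ is located, so $A$ is located, in particular in $S$.
\item $A$ is inhabited when $x_0\geq a$, since $a\in A$.
\item $b\in S-A$, because $\rho(b,A)\geq b-x_0>0$. So $S-A$ is inhabited.
\end{itemize}

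By C-connectedness there is $p\in A\cap\overline{S-A}$; in particular $p\in S$ and $p\leq x_0$. I claim $p\geq x_0$, whence $x_0=p\in S$. It suffices to show that every $s\in S-A$ satisfies $s>x_0$. Suppose first that $x_0>a$. Then density makes $x_0$ a limit of points of $A$, so $\rho(s,A)>0$ yields $|s-x_0|>0$. Hence either $s>x_0$, or $s<x_0$; the latter would put $s\in A$, contradicting $\rho(s,A)>0$. If instead $x_0$ is close to $a$, the point $a$ itself witnesses this.

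I expect the main care to be needed here: the case split on the position of $x_0$, and checking that Lemma \ref{jun03l2} delivers exactly the locatedness of $A$ that C-connectedness requires. This locatedness was the first gap in \cite{dsb}.

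\emph{(ii)$\Rightarrow$(i).} Let $A$ be an inhabited, located, closed subset of $S$, and let $a\in A$ and $b\in S-A$. Since $\rho(b,A)>0$, we have $a\neq b$; say $a<b$, the other case being symmetric. By (ii), $[a,b]\subset S$, so $t\mapsto\rho(t,A)$ is defined and uniformly continuous on $[a,b]$.

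Construct sequences $a_n\leq b_n$ in $[a,b]$ with $b_n-a_n=2^{-n}(b-a)$, $\rho(a_n,A)<2^{-n}$ and $\rho(b_n,A)>0$. Start with $a_0=a$, $b_0=b$. At each step take the midpoint $m$ and decide, by approximate comparison, either $\rho(m,A)>0$ (replace $b_n$ by $m$) or $\rho(m,A)<2^{-n-1}$ (replace $a_n$ by $m$).

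The common limit $x$ lies in $[a,b]\subset S$. By locatedness, pick $y_n\in A$ with $|y_n-a_n|<2^{-n+1}$; then $y_n\to x$, and $x\in A$ since $A$ is closed in $S$. Also $b_n\in S-A$ and $b_n\to x$, so $x\in\overline{S-A}$. Thus $A\cap\overline{S-A}$ is inhabited, and $S$ is C-connected.
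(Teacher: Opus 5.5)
\textbf{There is a genuine gap in your proof of (ii)$\Rightarrow$(i).}

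Your invariant $\rho(a_n,A)<2^{-n}$ is not preserved. Whenever the first alternative is chosen you keep $a_{n+1}=a_n$, and then you know only $\rho(a_{n+1},A)<2^{-n}$, not $<2^{-n-1}$. The construction can in fact converge to a point outside $A$.

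Take $S=[0,1]$, $A=\{0,\tfrac{3}{10}\}$, $a=0$, $b=1$.
\begin{itemize}
\item At the first step the midpoint $\tfrac{1}{2}$ has $\rho(\tfrac{1}{2},A)=\tfrac{1}{5}<\tfrac{1}{2}$. Your comparison may therefore return the second alternative, giving $[a_1,b_1]=[\tfrac{1}{2},1]$.
\item At every later step $n\geq 1$ the midpoint $m$ lies in $(\tfrac{1}{2},1)$. So $\rho(m,A)=m-\tfrac{3}{10}>2^{-n-1}$: at $n=1$, $m=\tfrac{3}{4}$ gives $0.45>\tfrac{1}{4}$, and for $n\geq 2$, $\rho(m,A)>\tfrac{1}{5}>\tfrac{1}{8}$. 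The first alternative is forced every time.
\item Hence $a_n=\tfrac{1}{2}$ for all $n\geq 1$, and the common limit is $\tfrac{1}{2}$. But $\rho(\tfrac{1}{2},A)=\tfrac{1}{5}>0$, so this limit lies in $S-A$, not in $A$.
\end{itemize}

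The cause is that you move the left endpoint to the midpoint, which is merely \emph{near} $A$. The points of $A$ near that midpoint may lie to its left, outside every later interval. Your final step, choosing $y_n\in A$ close to $a_n$, then has nothing to converge to in $A$.

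The repair, which is what the paper does, is to move the left endpoint to an actual point of $A$:
\begin{itemize}
\item Decide between $\rho(m,A)>0$ and $\rho(m,A)<\tfrac{1}{4}(b_n-a_n)$, with the threshold tied to the current length.
\item In the second case choose $a_{n+1}\in A$ with $|a_{n+1}-m|<\tfrac{1}{4}(b_n-a_n)$, and keep $b_{n+1}=b_n$.
\end{itemize}
Then $a_{n+1}$ lies strictly inside $(a_n,b_n)$, and the length shrinks by a factor of at most $\tfrac{3}{4}$ rather than exactly $\tfrac{1}{2}$. Also $a_n\in A$ and $b_n\in S-A$ for every $n$. The common limit lies in $[a,b]\cap\overline{A}$, hence in $A$, because $[a,b]\subset S$ and $A$ is closed in $S$. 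It also lies in $\overline{S-A}$ as the limit of the $b_n$.

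\textbf{Your (i)$\Rightarrow$(ii) is correct}, with three minor points.
\begin{itemize}
\item The case ``$x_0$ close to $a$'' should be organised as: for each $\varepsilon>0$, either $x_0>a$ or $x_0<a+\varepsilon$. This gives $\rho(x_0,A)=0$ outright, after which your argument that every $s\in S-A$ exceeds $x_0$ goes through.
\item The paper's route is shorter. If $p<x_0$, some $s\in S-A$ lies within $x_0-p$ of $p$, so $s<x_0$ and $s\in A$, which is absurd. Hence $p\geq x_0$, with no need to control all of $S-A$.
\item When $S$ is unbounded below, $A=S\cap(-\infty,x_0]$ is only \emph{locally} totally bounded. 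That is what the proof of Lemma \ref{jun03l2} actually yields, despite its wording, and it is enough for locatedness in $\mathbb{R}$.
\end{itemize}
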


\begin{proof}
Assuming (i), and given $x\in\left[  a,b\right]  $, we have either $a<x$ or
$x<b$. Without loss of generality, we assume the latter alternative. Being
C-connected, $S$ is D-connected, so by Lemma \ref{jun03l3}, $S\cap\left[
a,b\right]  $ is dense in $\left[  a,b\right]  $. Hence, by Lemma
\ref{jun03l2}, the set $A\equiv S\cap(-\infty,x]$ is locally totally bounded
and therefore located in $\mathbb{R}$ \cite[2.2.18]{bv}. Moreover, $A$ is
clearly closed in $S$, and $\rho(b,A)\geq b-x>0$. Since $S$ is C-connected,
there exists $\xi\in A\cap\overline{S-A}$. Clearly, $\xi\leq x$. If $\xi<x$,
then there exists $s\in S-A$ with $\xi<s<x$ and therefore $x\in A$, a
contradiction. Hence $\xi\geq x$ and therefore $\xi=x$. Thus (i) implies (ii).

Now assume (ii) and let $A$ be a located closed subset of $S$. Let $a\in A$
and let $S-A$ be inhabited by a point $b$; then $a\neq b$.\footnote{%
%TCIMACRO{\TeXButton{sf}{\normalfont\sf}}%
%BeginExpansion
\normalfont\sf
%EndExpansion
Reader, be aware that in a metric space $\left(  X,\rho\right)  $, the
statement $x\neq y$ means, \emph{not }$\lnot(x=y)$, but $\rho(x,y)>0$.}
Consider, for example, the case $a<b$. Setting $a_{1}=a$ and $b_{1}=b$, we
construct sequences $\left(  a_{n}\right)  _{n\geq1},\left(  b_{n}\right)
_{n\geq1}$ in $\left[  a,b\right]  $ such that for each $n>1$,%
\[
0<b_{n}-a_{n}<\tfrac{3}{4}(b_{n-1}-a_{n-1}),
\]
$a_{n}\in A$, $b_{n}\in S-A$, and $\left[  a_{n},b_{n}\right]  \subset\left[
a_{n-1},b_{n-1}\right]  $. Suppose we have found $a_{n-1}$ and $b_{n-1}$ with
the applicable properties. Let $c_{n}=\frac{1}{2}\left(  a_{n-1}%
+b_{n-1}\right)  $. Either $\rho(c_{n},A)>0$ or $\rho(c_{n},A)<\frac{1}%
{4}\left(  b_{n-1}-a_{n-1}\right)  $. In the first case, set $a_{n}=a_{n-1}\in
A$ and $b_{n}=c_{n}\in S-A$, and note that%
\[
0<b_{n}-a_{n}=\tfrac{1}{2}\left(  b_{n-1}-a_{n-1}\right)  <\tfrac{3}{4}\left(
b_{n-1}-a_{n-1}\right)  .
\]
In the second case, choose $a_{n}\in A$ such that $\left\vert a_{n}%
-c_{n}\right\vert <\frac{1}{4}\left(  b_{n-1}-a_{n-1}\right)  $ and set
$b_{n}=b_{n-1}\in S-A$; then%
\[
0<\tfrac{1}{4}\left(  b_{n-1}-a_{n-1}\right)  <b_{n}-a_{n}<\tfrac{3}{4}\left(
b_{n-1}-a_{n-1}\right)  .
\]
In both cases we clearly have $\left[  a_{n},b_{n}\right]  \subset\left[
a_{n-1},b_{n-1}\right]  $. This completes the inductive construction. We now
see that $\left(  \left[  a_{n},b_{n}\right]  \right)  _{n\geq1}$ is a
descending sequence of intervals with diameters tending to $0$ as
$n\rightarrow\infty$. Hence $\left(  a_{n}\right)  _{n\geq1}$ and $\left(
b_{n}\right)  _{n\geq1}~$both converge to the unique point $\xi$ of $%
%TCIMACRO{\tbigcap _{n\geq1}}%
%BeginExpansion
{\textstyle\bigcap_{n\geq1}}
%EndExpansion
\left[  a_{n},b_{n}\right]  $. Clearly, $\xi\in\overline{A}=A$ and $\xi
\in\overline{S-A}$, so $\xi\in A\cap\overline{S-A}$. Thus (ii) implies (i).
\end{proof}

%

%TCIMACRO{\TeXButton{medskip}{\medskip}}%
%BeginExpansion
\medskip
%EndExpansion

We now look briefly at the remaining notion of connectedness discussed in
\cite{dsb,dsb78a,dsbcon77,MarkM78}.

A metric space $(X,\rho)$ is said to be\ O-\textsc{connected} if for each
located open subset $A$ of $X$ with inhabited metric complement, there exists
a point $\xi$ in the closure of $A$ such that $\xi\neq x$ for each $x\in A$.
The theorem we want to address, whose proof in \cite{dsb} has gaps similar to
the two in the proof of \cite[Thm 1]{dsb}, is the following.

\begin{theorem}
\label{jun06t1}The following conditions are equivalent on a located subset $S
$ of $\mathbb{R}$.

\begin{enumerate}
\item[\emph{(i)}] $S$ is O-connected.

\item[\emph{(ii)}] If $a,b\in S$ and $a<b$, then the open interval $\left(
a,b\right)  \subset S$.
\end{enumerate}
\end{theorem}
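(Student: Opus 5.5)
The plan is to follow the pattern of the proof of Theorem \ref{jun03t1}, adapting it to open sets and open intervals.

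\emph{(i) implies (ii).} Let $a,b\in S$ with $a<b$, and let $x\in(a,b)$. We need $x\in S$. The natural test set is
\[
A\equiv S\cap(-\infty,x),
\]
which is open in $S$. Two things have to be checked.
\begin{enumerate}
\item[--] The metric complement of $A$ in $S$ is inhabited, since $\rho(b,A)\geq b-x>0$.
\item[--] $A$ is located in $\mathbb{R}$. This is where density and Lemma \ref{jun03l2} come in.
\end{enumerate}
For density, I would first show that O-connectedness implies D-connectedness. An inhabited open, closed, located $B$ with $S-B$ inhabited would yield $\xi\in\overline{B}=B$ with $\xi\neq x$ for all $x\in B$, which is absurd. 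Lemma \ref{jun03l3} then gives that $S\cap[a,b]$ is dense in $[a,b]$. Lemma \ref{jun03l2} makes $S\cap(-\infty,x]$ totally bounded, and the same density shows it is the closure of $A$. So $A$ is locally totally bounded, hence located by \cite[2.2.18]{bv}.

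O-connectedness now gives $\xi\in\overline{A}$ with $\xi\neq s$ for every $s\in A$. Clearly $a\le\xi\le x$. Suppose $\xi<x$. By density there is $s\in S$ with $\xi<s<x$ close to $\xi$, and more to the point $s\in A$ can be taken arbitrarily near $\xi$. That alone is no contradiction, so the real point is to get $\xi\in S$. The hard step is showing $\xi$ must equal $x$ and lie in $S$. I expect that this requires a different choice of $A$, perhaps $A=S\cap(-\infty,x)$ together with an argument that $\xi\neq s$ for all $s\in A$ forces $\xi\geq x$. Indeed, if $\xi<x$, density puts points of $A$ within any distance of $\xi$, but points "$\neq\xi$" can still be close to $\xi$. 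So I would instead use $\xi\in\overline{A}$ and argue that $\xi$ is the supremum of $A$, showing $\xi=x$. Concluding $x\in S$ then still seems to need more, perhaps using the closedness of $S$, which follows from locatedness plus (ii)-type density, or testing with the open set $S\cap(-\infty,x)\cup S\cap(x,\infty)$. This is the main obstacle, and I would try the latter set first: it is open, and it is located using Lemma \ref{jun03l2} on both sides. The resulting $\xi$ is in its closure yet apart from all its points, and I would aim to force $\xi=x$ and $\xi\in S$.

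\emph{(ii) implies (i).} Let $A$ be a located open subset of $S$, with $a\in A$ and $b\in S-A$, say $a<b$. Run the same bisection as in Theorem \ref{jun03t1} to get $a_n\in A$ and $b_n\in S-A$ converging to a common $\xi$, with $\xi\in\overline{A}$. For $s\in A$, $\rho(b_n,A)>0$ and locatedness should give $\xi\neq s$. Since $A$ is open in $S$ and $(a,b)\subset S$, points of $S$ near $s$ lie in $A$. If $\xi$ were within the open ball around $s$ contained in $A$, nearby $b_n$ would lie in that ball, contradicting $b_n\in S-A$. Turning this into the positive statement $\rho(\xi,s)>0$ uses the approximate-splitting of reals and the fact that $b_n\to\xi$.
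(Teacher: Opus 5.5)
Your proposal has a genuine gap in (i)$\Rightarrow$(ii). You stop at exactly the decisive step, and the repairs you suggest do not work. The missing observation is that O-connectedness is a property of the metric space $S$ itself. So ``the closure of $A$'' means the closure in $S$, and the point $\xi$ it supplies is a point of $S$; the paper writes $\xi\in S\cap\overline{A}$. With that, your own test set $A=S\cap(-\infty,x)$ finishes the argument at once:
\begin{enumerate}
\item[--] $\xi\le x$, because $\xi\in\overline{A}$;
\item[--] if $\xi<x$, then $\xi\in S\cap(-\infty,x)=A$, so $\xi\neq\xi$, which is absurd;
\item[--] hence $\xi\ge x$, and so $x=\xi\in S$.
\end{enumerate}
You need neither approximation of $\xi$ by points of $A$ nor closedness of $S$. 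In any case $S$ need not be closed: $(0,1)$ satisfies (ii). Your alternative set $S\cap(-\infty,x)\cup S\cap(x,\infty)$ is a dead end. Any point of $S$ in its metric complement must equal $x$, so you cannot even apply O-connectedness to it without already knowing $x\in S$. Note also that if the closure were read in $\mathbb{R}$, the theorem would be false. Your bisection argument would then show that $\mathbb{Q}$ satisfies (i), although $\mathbb{Q}$ violates (ii).

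The same misreading leaves a hole in (ii)$\Rightarrow$(i). You never check that the bisection limit $\xi$ lies in $S$, which is required for $\xi$ to be in the closure of $A$ in $S$. This is precisely where (ii) is used, apart from ensuring that the midpoints $c_n$ lie in $(a,b)\subset S$, so that $\rho(c_n,A)$ is defined and $b_n=c_n\in S-A$. The check goes as follows:
\begin{enumerate}
\item[--] your ball argument gives $\xi\neq s$ for all $s\in A$, in particular $\xi\neq a$;
\item[--] $|\xi-b|\ge\rho(b,A)>0$, since $\xi\in\overline{A}$;
\item[--] as $\xi\in[a,b]$, this yields $a<\xi<b$, so $\xi\in S$ by (ii).
\end{enumerate}
Incidentally, you do not need approximate splitting to get $\xi\neq s$. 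If $S\cap(s-r,s+r)\subset A$, the contradiction you describe refutes $|\xi-s|<r$, and $\neg(|\xi-s|<r)$ already means $|\xi-s|\ge r>0$. The rest of your plan matches the paper's route: O-connected implies D-connected, density via Lemma~\ref{jun03l3}, and locatedness of $A$ via Lemma~\ref{jun03l2}.
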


\begin{proof}
[Sketch Proof]Assuming (i), let $a,b\in S$ and $a<b$. Since O-connected
implies D-connected \cite[2.1]{dsbcon77}, Lemma \ref{jun03l3} shows that
$S\cap\left[  a,b\right]  $ is dense in $\left[  a,b\right]  $. A proof
similar to that of Lemma \ref{jun03l2} shows that if $x\in\left(  a,b\right)
$, then $A\equiv S\cap(-\infty,x)$ is located in $\mathbf{R}$. Since $A$ is
open in $S$ and $b\in S-A$, it follows from the O-connectedness of $S$ that
there exists $\xi\in S\cap\overline{A}$ such that $\xi\neq z$ for each $z\in
A$. Since $\xi\in\overline{A}$, we have $\xi\leq x$. On the other hand, if
$\xi<x$, then $\xi\in A$, which is absurd. Thus $x=\xi\in S$. Hence (i)
implies (ii).

Suppose, conversely, that (ii) holds, and let $A$ be a located open subset of
$S$. Let $a\in A$ and let $S-A$ be inhabited by a point $b$; then $a\neq b$.
We may assume that $a<b$. Mirroring the argument in the second half of the
proof of Theorem \ref{jun03t1}, we construct $\xi\in\left[  a,b\right]
\cap\overline{A}\cap\overline{(S-A)}$. For each $x\in A$ there exists $r>0$
such that $(x-r,x+r)\subset A$. If $\left\vert \xi-x\right\vert <r$, then we
can find $y\in S-A$ with $\left\vert y-\xi\right\vert <r-\left\vert
\xi-x\right\vert $ and therefore $\left\vert y-x\right\vert <r$; but then
$y\in A\cap(S-A)$, a contradiction. We conclude that $\xi\neq x$ for each
$x\in A$. In particular, $\xi\neq a$ and $\xi\neq b$, so $\alpha<\xi<b$ and
therefore $\xi\in S$. Thus (ii) implies (i).
\end{proof}

\subsection{Concluding remarks}

Does \emph{connected} imply either \emph{C-connected} or \emph{O-connected}?
One step in the direction of an affirmative answer to these questions might be this.

\begin{lemma}
\label{jun03l1}Let $S$ be a connected subset of $\mathbb{R}$, and $a,b$ points
of $S$ with $a<b$. Then $S\cap\left(  a,b\right)  $ is dense in $\left[
a,b\right]  $.
\end{lemma}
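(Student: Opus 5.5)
Fix $x\in[a,b]$ and $\varepsilon>0$; the goal is a point $s\in S\cap(a,b)$ with $|s-x|<\varepsilon$. Shrinking $\varepsilon$, we may assume $\varepsilon<(b-a)/2$. Since either $x<a+\varepsilon/2$ or $x>a$, and either $x>b-\varepsilon/2$ or $x<b$, we can replace $x$ by a point of $(a+\varepsilon/4,b-\varepsilon/4)$ within $\varepsilon/2$ of it. It therefore suffices to prove the following. For every $c<d$ in $(a,b)$, the set $S\cap(c,d)$ is inhabited.

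Suppose $c<d$ lie in $(a,b)$. Pick $c<u<v<d$, and form the two open subsets of $S$
\[
U\equiv S\cap(-\infty,v),\qquad V\equiv S\cap(u,\infty).
\]
Both are open in $S$. They are inhabited, since $a\in U$ and $b\in V$. They cover $S$ because for each $s\in S$ either $s<v$ or $s>u$. Since $S$ is connected, there is a point $s\in U\cap V$, so $s\in S$ and $u<s<v$. Hence $s\in S\cap(c,d)\subset S\cap(a,b)$, which is what we needed. With the reduction of the first paragraph, this gives an element of $S\cap(a,b)$ within $\varepsilon$ of the original $x$, so $S\cap(a,b)$ is dense in $[a,b]$.

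The only delicate point is the covering step. Constructively one cannot split $S$ at a single point $x$, because $s<x$ or $s\geq x$ is not decidable. That is why the cut is made with two overlapping open half-lines, which approximate splitting allows. The reduction to points strictly inside $(a,b)$ uses the same approximate splitting and causes no difficulty. No locatedness of $S$ is needed, unlike in Lemma \ref{jun03l3}.
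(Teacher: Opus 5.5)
Your proof is correct and is essentially the paper's argument: cover $S$ by two overlapping open half-lines, inhabited by $a$ and $b$, and use connectedness to get a point in the overlap. Your preliminary reduction to a subinterval $(c,d)\subset(a,b)$ supplies details the paper leaves implicit: that the point found lies in $(a,b)$, and that $x$ may be any point of $[a,b]$. One small slip: to land in $(a+\varepsilon/4,b-\varepsilon/4)$, the split should be ``$x<a+\varepsilon/2$ or $x>a+\varepsilon/4$'' (and symmetrically at $b$), not ``$x>a$''.
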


\begin{proof}
Let $x\in\left(  a,b\right)  $ and $\varepsilon>0$. Then $S$ is the union of
the two subsets $S\cap(-\infty,x+\varepsilon)$ and $S\cap\left(
x-\varepsilon,\infty\right)  $, which are inhabited and open in $S$. Hence
there is a point $y$ of $S$ that belongs to the intersection of those two
sets. Then $x-\varepsilon<y<x+\varepsilon$, so $\left\vert x-y\right\vert
<\varepsilon$.
\end{proof}%

%TCIMACRO{\TeXButton{medskip}{\medskip}}%
%BeginExpansion
\medskip
%EndExpansion

However, since continuous mappings preserve connectedness, the Brouwerian
example\footnote{%
%TCIMACRO{\TeXButton{sf}{\normalfont\sf}}%
%BeginExpansion
\normalfont\sf
%EndExpansion
That Brouwerian example was originally presented by Bishop on page 5 of
\cite{Bishop}. For other Brouwerian examples on connectedness properties, see
\cite{dsb,MarkM78}.} on pages 7--8 of \cite{BB} serves to show that if, for
all uniformly continuous functions from $\left[  0,1\right]  $ to$~\left[
-1,1\right]  $, the totally bounded, hence located, and connected range of $f$
had property (ii) of Theorem \ref{jun03t1}, then we could prove the
omniscience principle

\begin{quote}
LLPO: If $\left(  a_{n}\right)  $ is a binary sequence with at most one term
equal to $0$ (that is, with $a_{m}a_{n}=0$ for all distinct $m$ and $n$), then
either $a_{n}=0$ for all even $n$ or else $a_{n}=0$ for all odd $n$.
\end{quote}%

%TCIMACRO{\TeXButton{noindent}{\noindent}}%
%BeginExpansion
\noindent
%EndExpansion
Bearing in mind Theorem \ref{jun06t1}, we now see that a constructive proof
that \emph{connected} implies \emph{O-connected} would provide us with one of
LLPO. Hence that implication---and \emph{a fortiori }that from \emph{connected
}to \emph{C-connected---}is essentially nonconstructive.%

%TCIMACRO{\TeXButton{bigskip}{\bigskip}}%
%BeginExpansion
\bigskip
%EndExpansion
%

%TCIMACRO{\TeXButton{bigskip}{\bigskip}}%
%BeginExpansion
\bigskip
%EndExpansion
%

%TCIMACRO{\TeXButton{noindent}{\noindent}}%
%BeginExpansion
\noindent
%EndExpansion
\textbf{Funding: \ \ }Not applicable.%

%TCIMACRO{\TeXButton{noindent}{\noindent}}%
%BeginExpansion
\noindent
%EndExpansion
\textbf{Author contributions: }\ I am the sole author of the paper, which is
100\% due to me.

%

%TCIMACRO{\TeXButton{bigskip}{\bigskip}}%
%BeginExpansion
\bigskip
%EndExpansion
%

%TCIMACRO{\TeXButton{bigskip}{\bigskip}}%
%BeginExpansion
\bigskip
%EndExpansion
%

%TCIMACRO{\TeXButton{noindent}{\noindent}}%
%BeginExpansion
\noindent
%EndExpansion
\textbf{Author's email: }\texttt{dugbridges@gmail.com}

\end{document}